\newtheorem{thm}{Theorem}[section]
\newtheorem{lem}[thm]{Lemma}
\newtheorem{prop}[thm]{Proposition}
\theoremstyle{definition}
\newtheorem{defn}[thm]{Definition}
\theoremstyle{remark}
\newtheorem{rem}[thm]{Remark}
\numberwithin{equation}{section}
\newcommand{\Z}{\mathbf{Z}}
\newcommand{\R}{\mathbf{R}}
\newcommand{\C}{\mathbf{C}}
\newcommand{\Q}{\mathbf{Q}}
\newcommand{\K}{\mathbf{K}}
\begin{document}



\title{On lengths on semisimple groups}
\author{Yves de Cornulier}%
\date{May 21, 2009}
\maketitle

\begin{abstract}We prove that every length on a simple group over a locally compact field, is either bounded or proper.\end{abstract}

\section{Introduction}

Let $G$ be a locally compact group.
We call here a \textit{semigroup length} on $G$ a function $L:G\to\R_+=[0,\infty[$ such that
\begin{itemize}
\item (Local boundedness) $L$ is bounded on compact subsets of $G$.
\item (Subadditivity) $L(xy)\le L(x)+L(y)$ for all $x,y$.
\end{itemize}
We call it a \textit{length} if moreover it satisfies
\begin{itemize}
\item (Symmetricalness) $L(x)=L(x^{-1})$ for all $x\in G$.
\end{itemize}

We do not require $L(1)=1$. Note also that local boundedness weakens the more usual assumption of continuity, but also include important examples like the word length with respect to a compact generating subset. See Section \ref{discussion} for further discussion. Besides, a length is called proper if $L^{-1}([0,n])$ has compact closure for all $n<\infty$.

\begin{defn}
A locally compact group $G$ has \textit{Property PL} (respectively \textit{strong Property PL}) if every length (resp. semigroup length) on $G$ is either bounded or proper.
\end{defn}

We say that an action of a locally compact group $G$ on a metric space is {\it locally bounded} if $Kx$ is bounded for every compact subset $K$ of $G$ and $x\in X$. This relaxes the assumption of being continuous. The action is {\it bounded} if the orbits are bounded. If $G$ is locally compact, the action is called {\it metrically proper} if for every bounded subset $B$ of $X$, the set $\{g\in G|B\cap gB\neq\emptyset\}$ has compact closure.

\begin{prop}
Let $G$ be a locally compact group. Equivalences:
\begin{itemize}
\item[(i)] $G$ has Property PL;
\item[(ii)] Any action of $G$ on a metric space, by isometries, is either bounded or metrically proper;
\item[(ii')] Any action of $G$ on a metric space, by uniformly Lipschitz transformations, is either bounded or metrically proper;
\item[(iii)] Any action of $G$ on a Banach space, by affine isometries, is either bounded or metrically proper.
\end{itemize}
\label{prop:eqPL}
\end{prop}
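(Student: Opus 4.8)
The plan is to build a dictionary between lengths on $G$ and locally bounded isometric actions of $G$, and to push each condition through it. From a length $L$ I would form the left-invariant pseudometric $d_L(g,h)=L(g^{-1}h)$: symmetry of $L$ makes $d_L$ symmetric and subadditivity gives the triangle inequality. Dividing by $H=\{g:L(g)=0\}$ (a subgroup, by subadditivity and symmetry) turns $d_L$ into a genuine metric on the left coset space $G/H$, on which $G$ acts isometrically by left translation; this action is locally bounded because $g_0^{-1}K^{-1}Kg_0$ is compact, its orbits are bounded iff $L$ is bounded, and it is metrically proper iff $L$ is proper. Conversely, an isometric action on $(X,d)$ with a basepoint $x_0$ gives the length $L(g)=d(x_0,gx_0)$, where subadditivity and symmetry come from $g$ being an isometry and local boundedness of $L$ is that of the action. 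Since a single bounded orbit forces all orbits to be bounded ($d(y,gy)\le 2d(x_0,y)+\sup_g d(x_0,gx_0)$), $L$ is bounded iff the action is; and reducing metric properness to balls $\bar B(x_0,r)$ --- where $B\cap gB\neq\emptyset$ forces $L(g)\le 2r$ --- shows $L$ is proper iff the action is metrically proper. These two translations give (i)$\Leftrightarrow$(ii) at once.

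For the Lipschitz variant, (ii')$\Rightarrow$(ii) is free because isometries are $1$-Lipschitz. For (ii)$\Rightarrow$(ii') I would re-metrize: given a uniformly $C$-Lipschitz action on $(X,d)$, put $d'(x,y)=\sup_{g\in G}d(gx,gy)$. The uniform bound yields $d\le d'\le C\,d$, so $d'$ is a metric bi-Lipschitz to $d$, and it is manifestly $G$-invariant, so the action is isometric for $d'$. As bi-Lipschitz metrics have the same bounded sets, boundedness and metric properness are unaffected, so (ii) applies to $(X,d')$ and transfers back; hence (ii)$\Leftrightarrow$(ii'). Finally (ii)$\Rightarrow$(iii) is immediate, an affine isometric action on a Banach space being in particular an isometric action on a metric space.

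The substantive step, and the main obstacle, is (iii)$\Rightarrow$(i), where I must manufacture an honest Banach-space cocycle out of a bare length $L$. Let $\pi$ be the isometric left-translation representation of $G$ on $\ell^\infty(G)$, $(\pi(g)f)(x)=f(g^{-1}x)$, and define $c\colon G\to\ell^\infty(G)$ by $c(g)(x)=L(g^{-1}x)-L(x)$. Although the ``primitive'' $x\mapsto L(x)$ need not lie in $\ell^\infty(G)$, subadditivity and symmetry give $-L(g)\le c(g)(x)\le L(g)$, so $c(g)\in\ell^\infty(G)$ with $\|c(g)\|_\infty\le L(g)$, while evaluation at $x=1$ yields $\|c(g)\|_\infty\ge|L(g)-L(1)|$. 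A direct check gives the cocycle identity $c(gh)=\pi(g)c(h)+c(g)$, so $\alpha(g)v=\pi(g)v+c(g)$ is an affine isometric action, locally bounded since $\|\alpha(g)v-v\|\le 2\|v\|+L(g)$. The two-sided bound on $\|c(g)\|_\infty$ makes the orbit of $0$ bounded iff $L$ is bounded; and since an affine isometric action is metrically proper exactly when $\{g:\|c(g)\|\le n\}$ is relatively compact for every $n$ (the quick reduction to balls $\bar B(0,n)$ shows $B\cap gB\neq\emptyset$ is equivalent, up to the factor $2$, to a bound on $\|c(g)\|$), the interlocking inclusions $L^{-1}([0,n])\subseteq\{g:\|c(g)\|\le n\}\subseteq L^{-1}([0,n+L(1)])$ show $\alpha$ is metrically proper iff $L$ is proper. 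Applying (iii) to $\alpha$ therefore forces $L$ to be bounded or proper, which is (i). Chaining $(i)\Leftrightarrow(ii)\Leftrightarrow(ii')\Rightarrow(iii)\Rightarrow(i)$ closes the equivalence.
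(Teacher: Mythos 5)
Your proof is correct, and it diverges from the paper's at both non-trivial junctures. For (i)$\Leftrightarrow$(ii) you and the paper use the same dictionary between lengths and isometric actions, $L(g)=d(x_0,gx_0)$ in one direction and $d_L(g,h)=L(g^{-1}h)$ in the other; one small caveat is that your $H=\{g:L(g)=0\}$ is a subgroup only after the harmless normalization $L(1)=0$ (if $L(1)>0$ then $H$ is empty and $d_L(g,g)\neq 0$), which is exactly item \ref{i} of Section \ref{discussion}, invoked by the paper for precisely this purpose. For the Lipschitz variant you prove (ii)$\Rightarrow$(ii') by the classical re-metrization $d'(x,y)=\sup_{g\in G}d(gx,gy)$, bi-Lipschitz to $d$ and $G$-invariant; the paper instead goes (i)$\Rightarrow$(ii') directly, observing that $L(g)=d(x_0,gx_0)$ satisfies $L(gh)\le L(g)+CL(h)$, hence is a \emph{weak length} in the sense of Section \ref{discussion} and is coarsely equivalent to a genuine length. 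The two devices are interchangeable here; the paper's choice showcases its weak-length machinery, while yours is more self-contained. The substantial difference is the closing implication: the paper proves (iii)$\Rightarrow$(ii) by citing the equivariant isometric embedding of an arbitrary metric space into a Banach space due to Nguyen Van Th\'e and Pestov \cite{NP}, whereas you prove (iii)$\Rightarrow$(i) directly by building, from a length $L$, the affine isometric action on $\ell^\infty(G)$ with linear part the left regular representation and cocycle $c(g)(x)=L(g^{-1}x)-L(x)$. Your verifications are sound: the cocycle identity is immediate, the two-sided estimate $|L(g)-L(1)|\le\|c(g)\|_\infty\le L(g)$ (which correctly uses symmetry of $L$, as (i) concerns symmetric lengths) shows the action is bounded iff $L$ is bounded, and your reduction of metric properness to balls $\bar B(0,n)$, where $B\cap\alpha(g)B\neq\emptyset$ pins $\|c(g)\|$ up to a factor $2$, shows it is metrically proper iff $L$ is proper. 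This Kuratowski-type construction makes the hardest implication elementary and removes the external citation; what the \cite{NP} route buys instead is the stronger direct statement that \emph{every} isometric action extends to an affine isometric Banach action, rather than closing the cycle through (i). Both routes are valid.
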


When $G$ is compactly generated, Property PL can also be characterized in terms of its Cayley graphs.

\begin{prop}
Let $G$ be a locally compact group. If $G$ has strong Property PL (resp. Property PL), then for any subset $S$ (resp. symmetric subset)  generating $G$ as a semigroup, either $S$ is bounded or we have $G=S^n$ for some $n$. If moreover $G$ is compactly generated, then the converse also holds.\label{cayleyPL}
\end{prop}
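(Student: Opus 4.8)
The plan is to translate between lengths and generating sets through the word-length function. Given a subset $S$ generating $G$ as a semigroup, I would work with $L_S(g)=\min\{n\ge 0:\ g\in (S\cup\{1\})^n\}$, the word length relative to $S\cup\{1\}$. This is finite everywhere (because $S$ generates $G$), subadditive, nondecreasing in the sense that the sets $(S\cup\{1\})^n$ increase to $G$, and it satisfies $L_S(s)\le 1$ for every $s\in S$; in the symmetric case it is moreover symmetric, hence a genuine length. Granting that $L_S$ is locally bounded (the delicate point, discussed below), it is a semigroup length, so strong Property~PL may be applied to it, and Property~PL may be applied in the symmetric case.

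For the forward implication I would then invoke the dichotomy directly. If $L_S$ is proper, then $L_S^{-1}([0,1])\supseteq S$ is relatively compact, so $S$ is bounded. If instead $L_S$ is bounded, say $L_S\le n$, then $G=(S\cup\{1\})^n=\bigcup_{k\le n}S^k$, i.e. $G$ has bounded diameter with respect to $S$. Since, by Property~PL, these are the only two possibilities, we obtain exactly the stated alternative (modulo the bookkeeping below relating bounded diameter to an exact power). The argument for the symmetric version is identical, with $L_S$ now a length and the symmetric form of the dichotomy.

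The converse is where compact generation enters decisively, and I expect it to be the cleanest part. Fix a symmetric compact generating set $K\ni 1$, so $G=\bigcup_m K^m$, and let $L$ be any (resp. symmetric) semigroup length; by local boundedness $c_0:=\sup_K L<\infty$. For every $c\ge c_0$ the sublevel set $S_c:=\{g:\ L(g)\le c\}$ contains $K$, hence generates $G$ as a semigroup (and is symmetric when $L$ is), so the combinatorial hypothesis applies: either $S_c$ is bounded, or $G=\bigcup_{k\le m}S_c^{\,k}$ for some $m$. In the latter case every $g\in G$ is a product of at most $m$ elements of length $\le c$, whence $L(g)\le mc$ and $L$ is bounded. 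If this never occurs, then $S_c$ is bounded for all $c\ge c_0$, and a fortiori for $c<c_0$ since $S_c\subseteq S_{c_0}$; thus every sublevel set of $L$ is relatively compact and $L$ is proper. Either way $L$ is bounded or proper, giving (strong) Property~PL.

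The main obstacle is establishing that $L_S$ is locally bounded, i.e. that every compact set lies in some $(S\cup\{1\})^n$; this is precisely what makes $L_S$ an admissible length, and without it strong Property~PL says nothing about $L_S$. I would derive it from Baire category: since $G$ is locally compact and $G=\bigcup_n (S\cup\{1\})^n$ is an increasing union, some $\overline{(S\cup\{1\})^{n_0}}$ has nonempty interior, after which a standard translation-and-compactness argument covers any compact set by a bounded power, possibly after replacing $S$ by $\overline{S}$ (which alters neither the boundedness of $S$ nor the diameter statement). The secondary, more cosmetic issue is upgrading $G=\bigcup_{k\le n}S^k$ to an exact power $G=S^n$: one notes $1\in S^{k_0}$ for some minimal $k_0$, so the powers $S^k$ increase along residues modulo $k_0$, and the only thing preventing the union from collapsing to a single power is a homomorphism $G\twoheadrightarrow\Z/d\Z$ carrying $S$ into one generating coset. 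This is absent in the situations of interest (for instance when $G$ is perfect, as for the simple groups of the main theorem), where the bounded-diameter condition coincides with $G=S^n$; in any case it is the bounded-diameter formulation that feeds into Proposition~\ref{prop:eqPL}.
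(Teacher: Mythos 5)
Your proof follows the paper's own argument in both directions: the forward implication via the word-length function of $S$, with local boundedness supplied by Baire category, and the converse via the sublevel sets $S_c=L^{-1}([0,c])$, with compact generation guaranteeing that some sublevel set generates and subadditivity converting bounded word-diameter into boundedness of $L$. Your converse is exactly the paper's, stated contrapositively, and is complete.

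The one point where you diverge is in fact a point where you are more careful than the paper. You deduce only $G=\bigcup_{k\le n}S^k$ and flag the upgrade to an exact power $G=S^n$ as obstructed by a possible surjection $G\to\Z/d\Z$ carrying $S$ into a generating coset; the paper simply writes ``$L$ is bounded (and hence $S^n=G$ for some $n$)'', which does not follow, since boundedness of the word length yields only the union. Your obstruction is not hypothetical: it is realized inside the paper's own class of examples. Take $G=\Q_p\rtimes\Z_p^*$ with $p$ odd, which has strong Property PL by Proposition \ref{qpzp}; let $\delta:G\to\Z_p^*\to(\Z/p\Z)^*\to\Z/2\Z$ be the evident continuous surjection, $N=\ker\delta$ its open, non-compact, index-two kernel, and $S=G\setminus N$. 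Then $S$ is symmetric (as the complement of a subgroup), closed and non-compact, hence unbounded, and $S^2=N$, so $S$ generates $G$ as a semigroup with $G=S\cup S^2$; but the powers $S^n$ alternate between the two cosets of $N$, so $G\neq S^n$ for every $n$. Thus the proposition's conclusion should be read as $G=\bigcup_{k\le n}S^k$, equivalently $G=(S\cup\{1\})^n$ --- which is what both your argument and the paper's actually establish, and which is all the converse direction uses. Your further remark is also correct that when no such quotient exists (e.g. $G$ perfect, as for the simple groups of the main theorem) the exact power is recovered: the set $\{m:1\in S^m\}$ is a numerical semigroup with gcd $1$, hence contains all large integers, and choosing $M$ with $1\in S^{M-k}$ for all $k\le n$ gives $S^k\subseteq S^M$ for each $k\le n$, whence $G=S^M$.

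One residual soft spot, which you share with the paper rather than introduce: Baire category gives nonempty interior for some $\overline{S^k}$, not for $S^k$ itself (the paper asserts the latter without justification, and for an arbitrary, possibly non-closed, non-Baire-property set $S$ this is not automatic). Your fallback of replacing $S$ by $\overline{S}$ does make the dichotomy go through for $\overline{S}$, and properness still returns boundedness of $S$; but in the other branch it yields bounded diameter with respect to $\overline{S}$, which does not formally transfer back to powers of $S$. For wild (e.g. dense) generating sets this step deserves more care; since the paper's proof has exactly the same leap, this is a fair rendering of the intended argument rather than a defect peculiar to yours.
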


I do not know if the converse holds for general locally compact $\sigma$-compact groups. Also, I do not know any example of a locally compact group with Property PL but without the strong Property PL.

If a locally compact group is not $\sigma$-compact, then it has no proper length and therefore both Property PL and strong Property PL mean that every length is bounded. Such groups are called {\it strongly bounded} (or are said to satisfy the {\it Bergman Property}); discrete examples are the full permutation group of any infinite set, as observed by Bergman \cite{Bergman} (see also \cite{Cor}). However the study of Property PL is mainly interesting for $\sigma$-compact groups, as it is then easy to get a proper length (it is more involved to obtain a continuous proper length; this is done in \cite{St}, based on the Birkhoff-Kakutani metrization Theorem).

The main result of the paper is.

\begin{thm}
Let $\K$ be a local field (that is, a non-discrete locally compact field) and $G$ a simple linear algebraic group over $\K$. Then $G_\K$ satisfies strong Property PL.\label{main}
\end{thm}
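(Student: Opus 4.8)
The plan is to treat separately the anisotropic and isotropic cases. If $G$ is $\K$-anisotropic then $G_\K$ is compact, so every semigroup length on it is bounded by local boundedness, and there is nothing to prove. From now on I assume $G$ is $\K$-isotropic, fix a maximal $\K$-split torus $S$ and a maximal compact subgroup $K$, and write $G_\K = K A^{+} K$ for the Cartan (polar) decomposition, where $A^{+}$ is a positive Weyl chamber in $S_\K$. Since $K$ is compact, local boundedness gives a constant $C$ with $L\le C$ on $K$, whence $|L(g)-L(a(g))|\le 2C$, where $a(g)\in A^{+}$ denotes the Cartan projection of $g$. Because $g\mapsto a(g)$ is proper, this reduces the whole statement to a dichotomy on the torus: $L$ is bounded on $G_\K$ iff it is bounded on $A^{+}$, and $L$ is proper iff $L(a)\to\infty$ as $a\to\infty$ in $A^{+}$. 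So it suffices to prove that, if $L$ is unbounded, then $L(a)\to\infty$ along $A^{+}$.

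The engine of the argument is the rank-one calculation inside each root subgroup. For a restricted root $\alpha$ I use the conjugation relation $a\,u_\alpha(x)\,a^{-1}=u_\alpha(\alpha(a)x)$, which yields the two-sided bound $|L(u_\alpha(\alpha(a)x))-L(u_\alpha(x))|\le 2L(a)$ for all $a\in S_\K$ and all $x$. I also use the $SL_2$-type identity expressing the coroot element $a_\alpha(t)$ as a product of a \emph{bounded} number (independent of $t$) of elements of $U_{\pm\alpha}(\K)$. Together with the facts that $\alpha(S_\K)$ is cocompact in $\K^{\times}$ and that $L$ is bounded on a compact generating ball of $U_\alpha(\K)$, these give the key local dichotomy: if there is a sequence $a_n$ with $L(a_n)$ bounded and $|\alpha(a_n)|\to\infty$, then $L$ is bounded on all of $U_\alpha(\K)$.

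With this in hand the boundedness half is immediate. If $L$ is bounded on $U_{\pm\alpha_i}(\K)$ for every simple root $\alpha_i$, then the $SL_2$-identity makes $L$ bounded on each coroot torus $A_{\alpha_i}$, hence on $S_\K$ (which is, up to finite index and a compact factor, the product of the coroot tori), hence on $A^{+}$, hence on $G_\K$ by the Cartan reduction. Contrapositively, if $L$ is unbounded then it is unbounded on some $U_{\alpha_0}(\K)$. I then propagate this to every root: conjugation by a fixed representative $n_w\in N(S)_\K$ of a Weyl element $w$ changes $L$ by the bounded amount $2L(n_w)$ and carries $U_\alpha$ to $U_{w\alpha}$, so unboundedness spreads across each Weyl orbit; the Chevalley commutator relations together with irreducibility of the restricted root system then spread it across roots of different lengths as well. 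Thus $L$ is unbounded on $U_\alpha(\K)$ for every $\alpha$.

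Finally, given $a_n\to\infty$ in $A^{+}$, some simple root satisfies $|\alpha_i(a_n)|\to\infty$ along a subsequence; if moreover $L(a_n)$ stayed bounded, the two-sided bound above would force $L$ to be bounded on $U_{\alpha_i}(\K)$, contradicting the previous step, \emph{provided} I can promote ``$L$ unbounded on $U_{\alpha_i}$'' to ``$L$ detects the scaling $x\mapsto\alpha_i(a_n)x$''. This last point is the main obstacle: in isolation a semigroup length on the vector group $U_\alpha(\K)\cong\K^{d}$ can be a degenerate seminorm, neither bounded nor proper, so properness in the relevant direction is not automatic. I expect to resolve it using the ambient structure that a bare vector group lacks, namely that the anisotropic centralizer $M=Z_K(S)$ acts on each root space, essentially irreducibly, by transformations that are $L$-quasi-invariant up to the additive constant $2C$, so that no unbounded sublevel geometry can single out a proper invariant subspace; combined with the cocompact scaling this upgrades unboundedness on $U_\alpha(\K)$ to properness there, closing the argument.
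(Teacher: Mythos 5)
Your route --- Cartan reduction to the split torus, contraction/scaling on root groups, Weyl-group bookkeeping --- is the same as the paper's, but there is a genuine gap at the base of your argument, and it is not the one you flag. The two-sided bound $|L(u_\alpha(\alpha(a)x))-L(u_\alpha(x))|\le 2L(a)$ is false for semigroup lengths: the correct estimate is $L(a)+L(a^{-1})$, and $L(a^{-1})$ is not controlled by $L(a)$ as $a\to\infty$ in the torus (local boundedness only helps on compacta). Since the theorem asserts \emph{strong} Property PL, you must handle non-symmetric $L$, and the paper shows this is not cosmetic: Remark \ref{nsymaff} exhibits a semigroup length on $\K\rtimes D$ that is non-proper on $D$ yet unbounded on $\K$, so your ``key local dichotomy'' is literally false in the affine group and can only be rescued by ambient structure in $G_\K$. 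The paper does this by symmetrizing: in $\textnormal{SL}_2$ the Weyl element conjugates each diagonal matrix to its inverse, so $L+L\circ \textnormal{inv}$ is equivalent to $L$ on $D$; in higher rank, where $-1\notin W$ in general (types $A_n$, $n\ge 2$, $D_{\mathrm{odd}}$, $E_6$, so you cannot conjugate $a$ to $a^{-1}$ by a Weyl element), one uses $W$-invariance, or the identity $\prod_{w\in W}wdw^{-1}=1$ in $D$ (as in the proof of Lemma \ref{lem:product}), which bounds $L(d_n^{-1})$ along any $L$-bounded sequence $(d_n)\subset D$. Nothing in your write-up performs this step.

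By contrast, the obstacle you single out at the end is a red herring: you never need $L$ to be proper on $U_{\alpha_i}$, nor to ``detect the scaling.'' Since $S_\K$ acts on each root space by the scalar $\alpha_i(a)$, once $L(a_n)$ (and, after the repair above, $L(a_n^{-1})$) is bounded and $|\alpha_i(a_n)|\to\infty$, conjugation carries the unit ball of the root space onto all of it at bounded $L$-cost, so $L$ is bounded on all of $U_{\alpha_i}(\K)$, contradicting unboundedness outright; the speculative argument via irreducibility of $Z_K(S)$ on root spaces is unnecessary, and since you only ``expect to resolve'' that point, the proof is incomplete there as written. A smaller soft spot: spreading unboundedness ``across roots of different lengths'' via Chevalley commutator relations only clearly works in one direction (boundedness on short root groups forces it on long ones); it is repairable --- the set of roots on which $L$ is unbounded is $W$-invariant, hence contains a full length class, which spans the character space, and that suffices for the endgame --- but you should say so. The paper avoids all this case analysis with one clean device: the subgroup $\Gamma\subset D^\vee$ of characters bounded along $L$-bounded sequences is $W$-invariant with torsion-free quotient, so by irreducibility of the $W$-action it is trivial or all of $D^\vee$; your contrapositive chain is an approximation of exactly this statement.
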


This result was obtained by Y. Shalom \cite{Shalom} in the case of continuous Hilbert lengths, i.e. lengths $L$ of the form $L(g)=\|gv-v\|$ for some continuous affine isometric action of $G$ on a Hilbert space, with an action of a group of $\K$-rank one. Some specific actions on $L^p$-spaces were also considered in \cite{CTV}.

My original motivation was to extend Shalom's result to actions on $L^p$-spaces, but actually the result turned out to be much more general. However, even for isometric actions on general Banach spaces, we have to prove the result not only in $\K$-rank one, but also in higher rank, in which case the reduction to $\textnormal{SL}_2$ requires some careful arguments.

The first step is the case of $\textnormal{SL}_2(\K)$; it is elementary but it seems that it has not been observed so far (even for $\K=\R$).

Then with some further work, and making use of the Cartan decomposition, we get the general case. In the case of rank one, this second step is straightforward; this was enough in the case of Hilbert lengths considered in \cite{Shalom} in view of Kazhdan's Property T for simple groups of rank $\ge 2$ (which states that every Hilbert length is bounded), but not in general as there always exist unbounded lengths.

\begin{rem}
It is necessary to consider lengths bounded on compact subsets. Indeed, write $\R$ as the union of a properly increasing sequence of subfields $K_n$. (For instance, let $I$ be a transcendence basis of $\R$ over $\Q$, write $I$ as the union of a properly increasing sequence of subsets $I_n$, and define $K_n$ as the set of reals algebraic over $\Q(I_n)$.) If $G=G(\R)$ is a connected semisimple group, then $\ell(g)=\min\{n|g\in G(K_n)\}$ is an unbounded symmetric (and ultrametric) non-locally bounded length on $G$. However $\ell$ is not bounded on compact subsets and $\{\ell\le n\}$ is dense provided $G$ is defined over $K_n$, and this holds for $n$ large enough.

Also, if $G=G(\C)$ is complex and non-compact, if $\alpha$ is the automorphism of $G$ induced by some non-continuous field automorphism of $\C$, and if $\ell$ is the word length with respect to some compact generating set, then $\ell\circ\alpha$ is another example of a non-locally bounded length neither bounded nor proper.
\end{rem}

Finally, it is convenient to have a result for general semisimple groups.

\begin{prop}
Let $\K$ be a local field and $G$ a semisimple linear algebraic group over $\K$. Let $L$ be a semigroup length on $G(\K)$. Then $L$ is proper if (and only if) the restriction of $L$ to every non-compact $\K$-simple factor $G_i(\K)$ is unbounded.\label{prop:semisimple} 
\end{prop}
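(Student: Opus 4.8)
The plan is to reduce the statement to the case treated by Theorem \ref{main} and then to reassemble the factors. First I would pass to the simply connected cover. Writing $\tilde G=\prod_i\tilde G_i$ for the product of the simply connected $\K$-simple factors, the multiplication map furnishes a $\K$-isogeny $\pi:\tilde G\to G$; on $\K$-points $\pi:\prod_i\tilde G_i(\K)\to G(\K)$ has finite kernel and open, finite-index image (the cokernel embeds into a finite Galois cohomology group). Since $\pi$ is a proper continuous homomorphism with these properties, $\tilde L=L\circ\pi$ is a semigroup length on $\prod_i\tilde G_i(\K)$, the length $L$ is proper on $G(\K)$ if and only if $\tilde L$ is proper, and for each $i$ the restriction $L|_{G_i(\K)}$ is unbounded if and only if $\tilde L|_{\tilde G_i(\K)}$ is unbounded; moreover $G_i(\K)$ is compact exactly when $\tilde G_i(\K)$ is. Hence I may assume $G(\K)=\prod_i G_i(\K)$ is a genuine direct product of $\K$-simple groups. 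Compact factors may be discarded: collecting them into a compact subgroup $C$ and writing $G(\K)=C\times\prod_{j\in J}G_j(\K)$ with the $G_j(\K)$ ($j\in J$) non-compact, properness of $L$ is equivalent to properness of its restriction to $\prod_{j\in J}G_j(\K)$, because $L$ is bounded on $C$. So it suffices to treat a finite direct product $G(\K)=\prod_{j\in J}G_j(\K)$ of non-compact $\K$-simple groups.

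The ``only if'' direction is immediate: if $L$ is proper but some $L|_{G_j(\K)}$ is bounded, then the closed non-compact direct factor $G_j(\K)$ lies in a sublevel set $\{L\le n\}$, forcing $G_j(\K)$ to be compact, a contradiction. For the ``if'' direction, assume every $L|_{G_j(\K)}$ is unbounded. By Theorem \ref{main} each $G_j(\K)$ has strong Property PL, so each restriction $L_j:=L|_{G_j(\K)}$ is proper. Fix a compact generating set $S_j$ of $G_j(\K)$ and set $M_j=\sup_{S_j}L_j<\infty$. The mechanism for extracting a single coordinate is the identity, valid in a direct product,
\[
g\,\iota_j(s)\,g^{-1}=\iota_j(x_j s x_j^{-1}),\qquad g=(x_j)_j,\ s\in G_j(\K),
\]
where $\iota_j$ is the inclusion of the $j$-th factor; subadditivity then gives $L_j(x_j s x_j^{-1})\le L(g)+M_j+L(g^{-1})$ for all $s\in S_j$. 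Thus conjugation by $x_j$ carries $S_j$ into the sublevel set $\{L_j\le L(g)+L(g^{-1})+M_j\}$, which is relatively compact since $L_j$ is proper. The decisive structural fact---available through the Cartan ($KAK$) decomposition used in the proof of Theorem \ref{main}---is that the conjugation action of a semisimple group on itself is metrically proper modulo its finite centre: an element conjugating a fixed compact generating set into a fixed compact set must itself remain in a fixed compact set. Applied to each $j$, this confines every $x_j$, hence $g$, to a compact set depending only on $L(g)+L(g^{-1})$, which is precisely properness of the symmetric length $\hat L:=L+L^{-}$, where $L^{-}(g)=L(g^{-1})$.

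The main obstacle is that $L$ is only a semigroup length, so $L(g)$ need not control $L(g^{-1})$, and the argument above a priori yields properness only of $\hat L$, not of $L$ itself (indeed, on an abelian group the symmetric ``diagonal'' length $|a-b|$ on a product shows that such a conclusion genuinely requires the non-abelian structure). To close this gap I would exploit that each $L_j$ is not merely unbounded but proper: a proper semigroup length on $G_j(\K)$ automatically satisfies $L_j(t^{-1})\le\phi_j(L_j(t))$ for some increasing $\phi_j$, since its sublevel sets, and hence their inverses, are relatively compact and $L_j$ is bounded there. Feeding this per-factor inverse control into the coordinate extraction above, and using the Cartan decomposition to compare the Cartan projections of $g$ and $g^{-1}$ across the factors, should allow one to bound $L(g^{-1})$ in terms of $L(g)$ and thereby upgrade properness of $\hat L$ to properness of $L$.

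I expect this comparison of $L(g)$ and $L(g^{-1})$---the precise point where non-symmetry interacts with the product structure, and where the abelian torus directions of different factors must be disentangled through the root-group action rather than inside the (abelian) maximal torus---to be the delicate step. Everything else is the reduction machinery of the first paragraph together with the conjugation identity, which become routine once the conjugation-properness of semisimple groups, implicit in the Cartan-decomposition arguments behind Theorem \ref{main}, is in hand.
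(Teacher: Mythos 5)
Your reductions (passing to the universal cover, discarding compact factors, the ``only if'' direction) match the paper's, and your ``decisive structural fact'' is essentially the paper's Lemma \ref{lem:commutator}, provable by the same Cartan-decomposition argument; granting it, your conjugation identity does prove that the \emph{symmetrized} length $\hat L(g)=L(g)+L(g^{-1})$ is proper. But since $\hat L\ge L$, properness of $\hat L$ does not give properness of $L$, and the step you defer --- bounding $L(g^{-1})$ in terms of $L(g)$ --- is precisely where the real content lies; your proposal offers only an expectation there, and the two ingredients you suggest cannot supply a proof. The per-factor inverse control $L_j(t^{-1})\le\phi_j(L_j(t))$ is true but useless, because a semigroup length on a product is not controlled by its restrictions to the factors: to exploit it you would need to bound the coordinates $x_j$ (or the values $L_j(x_j)$) of an $L$-bounded sequence, and that is exactly the properness statement being proved --- writing $\iota_j(x_j)=g\prod_{i\ne j}\iota_i(x_i)^{-1}$ makes the circularity explicit. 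Likewise, the Cartan projections of $g$ and $g^{-1}$ are related by the opposition involution, a fact about the group that carries no information whatsoever about the non-symmetric function $L$. So there is a genuine gap, and it sits at the one point you yourself flag as delicate.

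The paper closes this gap with a Weyl-group averaging trick, peeling off one simple factor at a time via Lemma \ref{lem:product} (for $H\times A$ with $H$ simple and $L$ proper on $H$ and on $A$, an induction then yields the Proposition). After translating the divergent sequence into the maximal split torus $D$ of $H$ by bounded elements, one uses that $\prod_{w\in W}wdw^{-1}=1$ for every $d\in D$, because $\sum_{w\in W}w\alpha=0$ for each character $\alpha$ ($W$ has no nonzero invariants in $D^\vee\otimes_{\Z}\R$, by irreducibility of the relative root system). Subadditivity applied to $\prod_{w\in W}(w,1)(d_n,a_n)(w^{-1},1)=(1,a_n^{|W|})$ bounds $L(1,a_n^{|W|})$, hence by properness on $A$ bounds $\kappa_n=a_n^{|W|}$ itself; the same product over $W\setminus\{1\}$ equals $(d_n^{-1},a_n^{-1}\kappa_n)$, whose $L$-value is then bounded, which yields exactly the missing bound on $L(d_n^{-1},a_n^{-1})$ and lets the commutator argument (your structural fact, in the paper's formulation via Lemma \ref{lem:commutator}) run to a contradiction. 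In short, the inverse is manufactured as a product of boundedly many bounded-conjugates of the element itself --- a global use of the Weyl action on the torus that your symmetrization-plus-per-factor framework does not see; with this (or an equivalent) idea supplied, the rest of your outline is sound.
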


This proposition relies on Theorem \ref{main}, from which we get that $L$ is proper on each factor $G_i(\K)$, and an easy induction based on the following lemma, of independent interest.

\begin{lem}
Let $H\times A$ be a locally compact group. Suppose that $H=G(\K)$ for some $\K$-simple linear algebraic group over $\K$. Let $L$ be a semigroup length on $G$, and suppose that $L$ is proper on $H$ and $A$. Then $L$ is proper.\label{lem:product}
\end{lem}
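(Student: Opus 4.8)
The plan is to reduce the properness of $L$ on $H\times A$ to a one-sided statement about the projection to $A$, and then to exploit the structure of $H=G(\K)$ through a conjugation trick that circumvents the asymmetry of semigroup lengths. If $G$ is $\K$-anisotropic then $H$ is compact and the claim is immediate, so assume $G$ is $\K$-isotropic. \emph{First reduction}: it suffices to show that $\pi_A(\{L\le n\})$ is relatively compact for every $n$, where $\pi_A$ is the projection to $A$. Indeed, if $\pi_A(\{L\le n\})$ lies in a set with compact closure $\overline{K_A}$, then for $(h,a)$ with $L(h,a)\le n$ we have $a^{-1}\in\overline{K_A}^{-1}$, so $L(1,a^{-1})\le C_A:=\sup_{b\in\overline{K_A}^{-1}}L(1,b)<\infty$ by local boundedness; from $(h,1)=(h,a)(1,a^{-1})$ we get $L(h,1)\le n+C_A$, and properness of $L$ on $H$ makes $\pi_H(\{L\le n\})$ relatively compact as well. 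Hence $\{L\le n\}$ has relatively compact closure inside the product of the two closures.

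Suppose for contradiction that $\pi_A(\{L\le n\})$ is not relatively compact, so there is a net $z_\alpha=(h_\alpha,a_\alpha)$ with $L(z_\alpha)\le n$ and $a_\alpha\to\infty$ in $A$. Properness on $A$ gives $L(1,a_\alpha)\to\infty$; since $(1,a_\alpha)=(h_\alpha^{-1},1)z_\alpha$ yields $L(1,a_\alpha)\le L(h_\alpha^{-1},1)+n$, we obtain $L(h_\alpha^{-1},1)\to\infty$. As the sublevel sets of $h\mapsto L(h^{-1},1)$ are the inverses of those of $L(\cdot,1)$, they are relatively compact by properness on $H$, so $h_\alpha^{-1}\to\infty$ and therefore $h_\alpha\to\infty$ in $H$.

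The crux is a \emph{doubling trick} using simplicity. The obstruction is that a semigroup length need not be symmetric, so $L(z_\alpha^{-1})$ is uncontrolled, and any attempt to cancel the $A$-coordinate of $z_\alpha$ by forming an inverse or a commutator in $H\times A$ reintroduces $L(z_\alpha^{-1})$. The point is that one can instead cancel the \emph{$H$-coordinate} without ever inverting $z_\alpha$, by conjugating inside $H$. Using the Cartan decomposition, there is a fixed compact set $\Omega\subseteq H$ such that every $h$ admits $w_h\in\Omega$ with $w_h h w_h^{-1}\cdot h\in\Omega$ (the Cartan projections of $h$ and $h^{-1}$ agree up to the finite Weyl group, whose representatives, together with a maximal compact subgroup, lie in a fixed compact set). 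Setting $\zeta_\alpha=(w_{h_\alpha},1)z_\alpha(w_{h_\alpha},1)^{-1}=(w_{h_\alpha}h_\alpha w_{h_\alpha}^{-1},a_\alpha)$, we have $L(\zeta_\alpha)\le n+2C_\Omega$ with $C_\Omega=\sup_{w\in\Omega}\max(L(w,1),L(w^{-1},1))$, and, crucially, the $A$-coordinate of $\zeta_\alpha$ is still $a_\alpha$. Hence $z_\alpha\zeta_\alpha=(h_\alpha\cdot w_{h_\alpha}h_\alpha w_{h_\alpha}^{-1},\,a_\alpha^2)$ has $H$-coordinate in $\Omega$, and correcting by one more fixed-compact factor gives $L(1,a_\alpha^2)\le 2n+3C_\Omega$, a bound independent of $\alpha$.

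By properness on $A$, this bound forces $a_\alpha^2$ to remain in a relatively compact subset of $A$. The remaining step, which I expect to be the main technical obstacle, is to pass from squares back to $a_\alpha$ itself: the squaring map need not be proper on a general locally compact $A$ (unbounded $2$-torsion), so one cannot immediately contradict $a_\alpha\to\infty$. I would resolve this by iterating the doubling to control enough of the closed subgroup generated by $a_\alpha$ to contradict $L(1,a_\alpha)\to\infty$ directly; in the intended application $A$ is a product of semisimple groups over local fields, where $2$-torsion is bounded and $a\mapsto a^2$ is proper, so $a_\alpha^2\to\infty$ already yields the contradiction. Either way we conclude that $\pi_A(\{L\le n\})$ is relatively compact, and by the first reduction that $L$ is proper.
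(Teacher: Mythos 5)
Your overall mechanism is the right one --- conjugating by bounded elements of $H\times\{1\}$ to kill the $H$-coordinate while the $A$-coordinate accumulates, which is exactly the paper's idea --- but the specific ``doubling trick'' rests on a false claim. There is in general \emph{no} fixed compact $\Omega\subseteq H$ such that every $h$ admits $w_h\in\Omega$ with $w_hhw_h^{-1}\cdot h\in\Omega$: such a $w_h$ would force $w_hhw_h^{-1}\in\Omega h^{-1}$, whereas conjugation by a bounded element changes the Cartan projection only by a bounded amount; so the claim needs the Cartan projections of $h$ and $h^{-1}$ to stay at bounded distance, and these are related by the opposition involution $\iota=-w_0$, which is nontrivial in types $A_{n\ge 2}$, $D_{2n+1}$, $E_6$. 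Concretely, in $H=\textnormal{SL}_3(\K)$ take $h=\mathrm{diag}(t,t,t^{-2})$: its Weyl conjugates are the permutations of the diagonal, none of which is within bounded distance of $h^{-1}=\mathrm{diag}(t^{-1},t^{-1},t^2)$ as $t\to\infty$ (and allowing arbitrary bounded conjugators does not help: boundedness of $dmd$ for $m$ in a fixed compact set forces the upper-left $2\times 2$ block of $m$ to vanish asymptotically, contradicting $\det m=1$). Your parenthetical justification is true but insufficient: the projections of $h$ and $h^{-1}$ agreeing ``up to the Weyl group'' does not make $h^{-1}$ a bounded perturbation of a single conjugate of $h$; that holds only when $-1\in W$ (e.g.\ $\textnormal{SL}_2$, $\textnormal{Sp}_{2n}$), so your proof covers only those types.

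The paper repairs precisely this by using \emph{all} of $W$ at once, and in doing so also dissolves your second, acknowledged gap. It first replaces $h_n$ by $d_n=k_nh_nk'_n$ in the maximal split torus $D$ (bounded $k_n,k'_n$, via Lemma \ref{lem:commutator}, which simultaneously fixes a unipotent $u$ with $[d_n,u]$ unbounded); then, since $\prod_{w\in W}wd_nw^{-1}$ is $W$-fixed hence trivial, multiplying the $|W|$ conjugates $(w,1)(d_n,a_n)(w^{-1},1)$ bounds $L(1,a_n^{|W|})$, so $\kappa_n=a_n^{|W|}$ is bounded by properness on $A$. Crucially, the argument never needs $a_n$ itself to be bounded --- your power-map obstruction is real and not fixable for the arbitrary locally compact $A$ of the statement (your fallback assumes $A$ semisimple, which the lemma does not grant). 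Instead, the product over $W\setminus\{1\}$ shows $L\bigl((d_n^{-1},a_n^{-1})(1,\kappa_n)\bigr)$ is bounded, hence $L(d_n^{-1},a_n^{-1})$ is bounded --- restoring the missing symmetry along the sequence --- and the contradiction comes from properness on $H$, not on $A$: the commutator $[(d_n,a_n),(u,1)]=([d_n,u],1)$ has bounded length while $[d_n,u]$ is unbounded. Your outline has no counterpart of this commutator step, and without it even a corrected doubling argument (bounding $a_\alpha^{|W|}$ rather than $a_\alpha$) cannot reach the contradiction you aim for.
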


Here are some more examples of PL-groups, beyond semisimple groups.

\begin{prop}
Let $K$ be a compact group, with a given continuous orthogonal representation on $\R^n$ for $n\ge 2$, so that the action on the 1-sphere is transitive (e.g. $K=\textnormal{SO}(n)$ or $K=\textnormal{SU}(m)$ with $2m=n\ge 4$). Then the semidirect product $G=\R^n\rtimes K$ has strong Property PL.\label{isomRn}
\end{prop}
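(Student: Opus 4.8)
The plan is to prove that $G = \R^n \rtimes K$ has strong Property PL by showing that any semigroup length $L$ on $G$ that is unbounded must be proper. Let me think about how the structure of $G$ forces this.

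The key structural fact is that $K$ is compact, so $L$ is bounded on $K$ (by local boundedness). This means that on $G$, the length $L$ is essentially controlled by its restriction to the normal subgroup $\R^n$. So the strategy reduces to understanding $L|_{\R^n}$ and showing: (1) if $L$ is unbounded on $G$, then $L$ is unbounded on $\R^n$; and (2) the transitivity of the $K$-action on the sphere forces $L|_{\R^n}$ to be "radial," hence comparable to a multiple of the norm, which is proper. Let me work out these pieces.

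For the radiality: since $K$ acts transitively on the unit sphere, for any two vectors $v, w \in \R^n$ with $\|v\| = \|w\|$, there is $k \in K$ with $kv = w$. In $G$, conjugation of the translation by $v$ by the element $k$ gives the translation by $kv = w$. Since $L$ is bounded on $K$ by some constant $C$, and $L$ is symmetric and subadditive, we get $|L(w) - L(v)| \le 2C$ whenever $\|v\| = \|w\|$. Thus $L$ on $\R^n$ depends on $\|v\|$ up to an additive constant: define $\phi(r) = \sup\{L(v) : \|v\| = r\}$, which satisfies $\phi(r) - 2C \le L(v) \le \phi(r)$ for $\|v\| = r$. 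The main work is to show $\phi(r) \to \infty$ as $r \to \infty$ whenever $L$ is unbounded, and that $\{L \le n\} \cap \R^n$ then has compact closure.

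The crucial step is a "subadditivity kills sublinear oscillation" argument adapted to the $\textnormal{SL}_2$-style reasoning from Theorem \ref{main}. Because $n \ge 2$, any vector $v$ can be written as a sum of two vectors of the same norm as $v$: indeed $v = v_1 + v_2$ where $\|v_1\| = \|v_2\| = \|v\|$ (taking $v_1, v_2$ symmetric about the direction of $v$, each of norm $\|v\|$, summing to $v$ requires the angle between them to satisfy $2\cos\theta = 1$, i.e.\ $\theta = \pi/3$, which is available when $n \ge 2$). This gives $\phi(r) \le L(v) + 4C \le 2\phi(r) + 4C$ trivially, but more usefully, writing $v$ of norm $2r$ as a sum of two vectors of norm $r$ gives $\phi(2r) \le 2\phi(r) + 2C$, while writing a vector of norm $r$ as a difference yields a reverse bound showing $\phi$ cannot jump too fast. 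The real dichotomy comes from the other direction: I would show that if $\phi$ is bounded on some unbounded set of radii, then by decomposing arbitrary translations using the sphere-transitivity and $n \ge 2$, $\phi$ is bounded everywhere, so $L$ is bounded on $\R^n$, and combined with boundedness on $K$ this forces $L$ bounded on $G = \R^n K$. Hence if $L$ is unbounded, $\phi(r) \to \infty$, giving properness on $\R^n$ and then on $G$.

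I expect the main obstacle to be the final assembly: transferring properness from $\R^n$ up to $G = \R^n \rtimes K$. Here I would invoke the spirit of Lemma \ref{lem:product}, though $G$ is a semidirect rather than direct product; the saving grace is that $K$ is compact, so $G / \R^n \cong K$ is compact and any subset of $G$ on which $L$ is bounded projects into $K$ (automatically precompact) and has bounded $\R^n$-part, hence is precompact. Making the quantitative comparison between $L$ and $\|\cdot\|$ uniform across directions — so that $\{L \le n\}$ genuinely has compact closure rather than merely unbounded complement — is the delicate point, and it is exactly here that the transitivity hypothesis on the sphere is indispensable, since without it $L$ could grow along some directions while staying bounded along others.
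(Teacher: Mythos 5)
Your proposal is correct and follows essentially the same route as the paper's proof: boundedness of $L$ on the compact group $K$ plus transitivity on spheres makes $L$ radial up to an additive constant $2\sup_K L$, and since $n\ge 2$ every vector of norm at most $r$ is the sum of two vectors of norm exactly $r$, so boundedness of $L$ on spheres of unboundedly large radii propagates to all of $\R^n$ and then, by subadditivity and $G=\R^n K$, to all of $G$ (the paper states this as the contrapositive: non-proper implies bounded). One small correction: since strong Property PL concerns semigroup lengths you may not assume $L$ symmetric, but you also do not need to --- the conjugation estimate $L(kvk^{-1})\le L(v)+2\sup_K L$ uses only that both $k$ and $k^{-1}$ lie in $K$ --- and likewise your worry about transferring properness up to $G$ is already settled by compactness of $K$, exactly as you note.
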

\begin{prop}
Let $\K$ be a non-Archimedean local field with local ring $\mathbf{A}$. Then the group $\K\rtimes\mathbf{A}^*$ has strong Property PL.\label{qpzp}
\end{prop}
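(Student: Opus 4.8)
The plan is to use that $G=\K\rtimes\mathbf{A}^*$ is the increasing union of the compact open subgroups $U_N=(\pi^{-N}\mathbf{A})\rtimes\mathbf{A}^*$ (where $\pi$ is a uniformizer and $v$ denotes the valuation), so that a subset of $G$ is relatively compact if and only if the valuations $v(x)$ of the $\K$-components $x$ of its elements are bounded below. Writing elements as pairs $(x,a)$ with product $(x,a)(y,b)=(x+ay,ab)$, and using that $\mathbf{A}^*$ is compact, local boundedness furnishes a constant $C=\sup_{a\in\mathbf{A}^*}L((0,a))<\infty$; since $(x,a)=(x,1)(0,a)$ we get $|L((x,a))-L((x,1))|\le C$, so it suffices to understand $L$ on the translation subgroup $\K$, that is, the function $x\mapsto L((x,1))$.

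The key algebraic input is the commutator identity $[(0,a),(x,1)]=((a-1)x,1)$, valid for all $a\in\mathbf{A}^*$ and $x\in\K$. Since $a-1\in\mathbf{A}$ always has $v(a-1)\ge 0$, this lets one multiply the translation part by a field element of any prescribed positive valuation at bounded cost: taking $a=1+\pi^{j}$ with $j\ge 1$ gives $v(a-1)=j$, whence $L(((a-1)x,1))\le 2C+L((x,1))+L((-x,1))$. Conjugating by $(0,-1)$ (note $-1\in\mathbf{A}^*$) further yields $L((-x,1))\le 2C+L((x,1))$, and more generally conjugation by $\mathbf{A}^*$, which acts transitively on each sphere $\{v(x)=n\}$, shows that $x\mapsto L((x,1))$ is constant on each sphere up to the additive constant $2C$.

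The dichotomy then follows by contraposition. Suppose $L$ is not proper; then there are a constant $t$ and elements $(x_k,a_k)$ with $L\le t$ and $v(x_k)\to-\infty$, hence $L((x_k,1))\le t+C$. Given any target valuation $m$, choose $k$ with $n_k:=v(x_k)<m$ and set $a=1+\pi^{m-n_k}\in\mathbf{A}^*$; then $(a-1)x_k$ has valuation exactly $m$ and is a commutator as above, so its length is bounded by $2t+6C$, a constant independent of $m$. Using the near-radiality from the previous step, $L((x,1))$ is then uniformly bounded over all $x\in\K$, so $L$ is bounded on $G$. The main obstacle is precisely the directionality forced by $v(a-1)\ge 0$: commutators can only raise valuations, so cheapness must be transported from the very negative valuations supplied by the failure of properness upward to every sphere; the absence of symmetry for a semigroup length is handled throughout by conjugating with units, especially $-1$, to control the opposite-sign and inverse terms.
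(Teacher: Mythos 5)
Your proof is correct, but it runs on a different engine than the paper's. The paper proves Proposition \ref{qpzp} in one line as a twin of Proposition \ref{isomRn}: once non-properness gives elements $x_k$ with $v(x_k)=n_k\to-\infty$ and bounded length, transitivity of $\mathbf{A}^*$ on spheres bounds $L$ on each sphere $\{v=n_k\}$, and then a purely \emph{additive} fact finishes it --- for $m\ge n$, any element of valuation $m$ is the sum of two elements of valuation $n$ (write $y=z+(y-z)$ with $v(z)=n$), so subadditivity over the product $(z,1)(y-z,1)$ bounds $L$ everywhere above the sphere. You instead use the \emph{multiplicative} structure: the commutator identity $[(0,a),(x,1)]=((a-1)x,1)$ with $a=1+\pi^{m-n_k}$ manufactures, at bounded cost, an element of any prescribed valuation $m>n_k$, after which sphere-transitivity fills in each sphere. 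Both skeletons are the same (transport cheapness upward from deep negative valuations; both respect the constraint that only valuations above $n_k$ are reachable), but the key step differs, and this has a real consequence: the paper's sum decomposition involves no inverses at all, so it applies verbatim to semigroup lengths, whereas your commutator contains $(x,1)^{-1}=(-x,1)$ and you must (and correctly do) neutralize this via conjugation by $(0,-1)$, using $(0,-1)^{-1}=(0,-1)$ and $L((0,\pm 1))\le C$, so no symmetry of $L$ is ever invoked. Your route is slightly longer but echoes the conjugation mechanism of Proposition \ref{affine} adapted to the case where the acting group $\mathbf{A}^*$ is compact and cannot contract; the paper's route is shorter and yields Propositions \ref{isomRn} and \ref{qpzp} by one unified sphere-plus-sum argument. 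All your computations check out (the commutator identity, $v(1+\pi^j)=0$, the constants $2t+6C$ and the final uniform bound), so the argument stands as a valid alternative proof of strong Property PL.
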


Note that the locally compact group $\K\rtimes\mathbf{A}^*$ is not compactly generated.

\section{Discussion on lengths}\label{discussion}

We observe here that our results actually hold for more general functions than lengths. Namely, call a {\it weak length} a function $G\to\R_+$ which is locally bounded and satisfies

\begin{center}\noindent \textit{(Control Axiom)} There exists a non-decreasing function $\phi:\R_+\to\R_+$ such that for all $x,y$, we have $L(xy)\le \phi(\max(L(x),L(y))$ for all $x,y$.
\end{center}

Note that every semigroup length satisfies the control axiom with $\phi(t)=2t$.
Besides, if $L,L'$ are two weak lengths on $G$, say that $L$ is coarsely bounded by $L'$ and write $L\preceq L'$ if $L\le u\circ L'$ for some proper function $u:\R_+\to\R_+$, and that $L$ and $L'$ are coarsely equivalent, denoted $L\simeq L'$, if $L\preceq L'\preceq L$. 
Here is a series of remarks concerning various definitions of lengths.

\begin{enumerate}\item\label{i} (Vanishing at 1) Let $L$ be a length (resp. weak length). Set $L'(x)=L(x)$ if $x\neq 1$ and $L'(1)=0$. We thus get another length (resp. weak length), and obviously $L$ and $L'$ are coarsely equivalent.

\item (Continuity) A construction due to Kakutani allows to replace any length by a coarsely equivalent length which is moreover continuous (see \cite[Theorem 7.2]{Hj}).

\item For every weak length $L$, there exists a semigroup length $L_1$ coarsely equivalent to $L$. The argument is as follows: we can suppose that $\phi$, the controlling function involved in the definition of weak length, is a bijection from $\R_+$ to $[\phi(0),+\infty[$ satisfying $\phi(t)\ge t+1$ for all $t$; then it makes sense to define inductively $\alpha(t)=t/\phi(0)$ for all $t\in [0,\phi(0)]$ and $\alpha(t)=\alpha(\phi^{-1}(t))+1$ for $t\ge\phi(0)$. By construction, $\alpha(\phi(t))\le \alpha(t)+1$ for all $t$. Thus $L'_1(x)=\alpha(L(x))$ satisfies the quasi-ultrametric axiom $L'_1(xy)\le\max(L'_1(x),L'_1(y))+1$, and consequently $L_1=1+L'_1$ is a semigroup length. Moreover, $\alpha$ increases to infinity, so that $L_1$ is coarsely equivalent to $L$. Note that if $L$ is symmetric, then so is $L_1$.

\item If $L$ is a length, then $L'(x)=L(x)+L(x^{-1})$ is a symmetric length. We have ($L$ bounded) $\Leftrightarrow$ ($L'$ bounded) and ($L$ proper) $\Rightarrow$ ($L'$ proper), but $L'$ can be proper although $L$ is not. In particular, they are not necessarily coarsely equivalent; when it is the case, $L$ is called {\it coarsely symmetric}. For instance, the semigroup word length in $\Z$ with respect to the generating subset $\{n\ge -1\}$ is not coarsely symmetric.
\end{enumerate}

It is well-known that a locally compact group is $\sigma$-compact (i.e. a countable union of compact subsets) if and only if it possesses a proper length. Trivially, this is a sufficient condition. Let us recall why it is necessary: let $(K_n)$ be a sequence of compact subsets covering $G$; we can suppose that $K_1$ has non-empty interior. Define by induction $M_1=K_1$ and $M_n$ as the set of products of at most $2$ elements in $M_{n-1}\cup K_n$. Then $L(g)=\inf\{n|g\in M_n\}$ satisfies the quasi-ultrametric axiom $L(xy)\le\max(L(x),L(y))+1$ and is symmetric and proper.

\section{Elementary results on lengths}

\begin{lem}
Let $G$ be a locally compact group and $K$ a compact normal subgroup. Then $G$ has Property PL if and only if $G/K$ has Property PL.\label{compact_kernel}
\end{lem}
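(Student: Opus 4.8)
The plan is to transport lengths across the projection $\pi\colon G\to G/K$ in both directions, relying on two facts that I would record at the outset: first, that $\pi$ is \emph{proper} (both the preimage and the image of a compact set are compact), which holds because $K$ is compact normal; and second, that any length $L$ on $G$ is bounded on $K$, say $L\le C$ on $K$, by local boundedness.

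For the implication ``$G/K$ has PL $\Rightarrow$ $G$ has PL'', given a length $L$ on $G$ I would push it down by setting $\bar L(\bar g)=\inf_{k\in K}L(gk)$, which is well defined on $G/K$ since $K$ is normal. The crucial observation is that $L$ is almost constant on each coset: using subadditivity together with $L(k^{-1})=L(k)\le C$ one gets the two-sided estimate $\bar L(\pi(g))\le L(g)\le \bar L(\pi(g))+C$. I would then verify that $\bar L$ is a length on $G/K$: subadditivity follows because $g'h'\in ghK$ whenever $g'\in gK$ and $h'\in hK$ (normality of $K$); symmetry follows from symmetry of $L$ and $(gK)^{-1}=g^{-1}K$; and local boundedness follows because $\pi$ is proper, so the preimage of a compact set is compact and $L$ is bounded there. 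Property PL of $G/K$ makes $\bar L$ bounded or proper, and the displayed estimate transfers the conclusion to $L$: boundedness is immediate, while properness passes since $L^{-1}([0,n])\subseteq \pi^{-1}(\bar L^{-1}([0,n]))$ and $\pi$ is proper.

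For the converse ``$G$ has PL $\Rightarrow$ $G/K$ has PL'', given a length $\bar L$ on $G/K$ I would simply pull it back to $L=\bar L\circ\pi$, which is again a length (subadditivity and symmetry because $\pi$ is a homomorphism, local boundedness because $\pi$ carries compact sets to compact sets). Property PL of $G$ gives that $L$ is bounded or proper. Since $L$ is constant on $K$-cosets, $L^{-1}([0,n])=\pi^{-1}(\bar L^{-1}([0,n]))$, hence $\bar L^{-1}([0,n])=\pi(L^{-1}([0,n]))$; boundedness of $L$ yields boundedness of $\bar L$, and properness transfers because the continuous image of a compact set is compact.

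The computations here are all routine; the genuine content is the properness of the projection modulo a compact normal subgroup, which is exactly what allows the bounded/proper dichotomy to pass back and forth. The one mild subtlety to watch is the asymmetry between the two constructions, the infimum pushforward $\bar L=\inf_{k}L(\cdot\,k)$ needed for the downward direction versus the plain pullback used upward; checking the estimate $\bar L\circ\pi\le L\le \bar L\circ\pi+C$ is the single place where local boundedness of $L$ on the compact group $K$ is essential, so I expect that to be the most delicate point even though it is elementary.
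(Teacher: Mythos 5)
Your proof is correct and takes essentially the same approach as the paper: the paper's nontrivial direction pushes $L$ down via $L'(g)=\sup_{k\in K}L(gk)$ with the two-sided estimate $L\le L'\le L+\sup_K L$, which is just the sup-variant of your infimum pushforward $\bar L$ satisfying $\bar L\circ\pi\le L\le \bar L\circ\pi+C$. The other direction (pullback along $\pi$, using properness of the projection) is the one the paper dismisses as trivial, and your verification of it is the standard one.
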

\begin{proof}
The forward implication is trivial. Conversely if $G/K$ has Property PL and $L$ is a length on $G$, then $L'(g)=\sup_{k\in K}L(gk)$ is a length as well, so is either bounded or proper, and $L\le L'\le L+\sup_KL$, so $L$ is also either bounded or proper.
\end{proof}

\begin{lem}
Suppose that $G$ has three closed subsets $K,K',D$ with $K,K'$ compact, and $G=KDK'$. Then a length on $G$ is bounded (resp. proper) if and only its restriction to $D$ is so.\label{cocompact_subset}
\end{lem}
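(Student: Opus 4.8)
The plan is to dispose of the two ``only if'' directions at once and then reduce the ``if'' directions to the decomposition $G=KDK'$. First I record the finiteness facts coming from compactness: since $K$ and $K'$ are compact, so are their images $K^{-1}$ and $(K')^{-1}$ under inversion, and as $L$ is locally bounded the four suprema $M=\sup_{k\in K}L(k)$, $M'=\sup_{k'\in K'}L(k')$, $N=\sup_{k\in K}L(k^{-1})$, $N'=\sup_{k'\in K'}L((k')^{-1})$ are all finite (symmetry of $L$ gives $N=M$ and $N'=M'$, but I shall only use finiteness). The forward directions are then immediate: $D\subseteq G$, so if $L$ is bounded on $G$ it is bounded on $D$; and if $L$ is proper on $G$ then for each $n$ the set $\{d\in D\mid L(d)\le n\}=D\cap\{g\in G\mid L(g)\le n\}$ sits inside a set with compact closure, hence has compact closure.

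For the bounded ``if'' direction, suppose $L$ is bounded on $D$ by some $R$. Given $g\in G$, write $g=kdk'$ with $k\in K$, $d\in D$, $k'\in K'$; subadditivity gives $L(g)\le L(k)+L(d)+L(k')\le M+R+M'$, so $L$ is bounded on $G$.

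The main step is the proper ``if'' direction. Assume $L|_D$ is proper, fix $n$, and take $g$ with $L(g)\le n$. Writing $g=kdk'$ as above, I recover the $D$-component as $d=k^{-1}g(k')^{-1}$, whence by subadditivity $L(d)\le L(k^{-1})+L(g)+L((k')^{-1})\le n+N+N'$. Thus every such $d$ lies in $E_n=\{d\in D\mid L(d)\le n+N+N'\}$, whose closure $\overline{E_n}$ is compact since $L|_D$ is proper. Therefore $\{g\in G\mid L(g)\le n\}\subseteq K\,\overline{E_n}\,K'$, and the right-hand side is compact, being the image of the compact set $K\times\overline{E_n}\times K'$ under the continuous multiplication map. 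Hence $\{g\in G\mid L(g)\le n\}$ has compact closure and $L$ is proper.

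The only delicate point, and the one I would take care over, is the transfer of the bound from $L(g)$ to $L(d)$ in the proper direction: it relies on the fact that the inverses $K^{-1}$ and $(K')^{-1}$ are again compact, so that $N,N'<\infty$, and on packaging the resulting inclusion into the single compact set $K\overline{E_n}K'$. Beyond bookkeeping with these compactness facts, subadditivity and local boundedness of $L$ carry everything, so I anticipate no serious obstacle.
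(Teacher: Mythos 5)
Your proof is correct and follows essentially the same route as the paper's: both hinge on recovering $d=k^{-1}g(k')^{-1}$ and transferring the bound via subadditivity and local boundedness on the compact sets $K^{-1},(K')^{-1}$; your set-theoretic packaging into $K\,\overline{E_n}\,K'$ is just a non-sequential rephrasing of the paper's sequence argument. Your explicit remark that symmetry is never used is a nice touch, since the paper later invokes this lemma for semigroup lengths (in the $\textnormal{SL}_2$ case).
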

\begin{proof}
Suppose that a length $L$ on $G$ is proper on $D$. Let $(g_n)$ in $G$ be bounded for $L$. Write $g_n=k_nd_n\ell_n$ with $(k_n,d_n,\ell_n)\in K\times D\times K'$. Then $L(d_n)$ is bounded. As $L$ is proper on $D$ and bounded on $K$ and $K'$, it follows that $(d_n)=(k_n^{-1}g_n\ell_n^{-1})$ is bounded; therefore $(g_n)$ is bounded as well. So $L$ is proper on all of $G$. The case of boundedness is even easier.
\end{proof}

As a consequence we get

\begin{lem}
Let $G$ be a locally compact group and $H$ a cocompact subgroup. If $H$ has (strong) Property PL, then $G$ also has (strong) Property PL.\qed\label{cocompact}
\end{lem}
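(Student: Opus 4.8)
The plan is to deduce the statement directly from Lemma \ref{cocompact_subset}. Since $H$ is cocompact in $G$, there is a compact subset $C\subseteq G$ with $G=HC$ (one takes a compact $C_0$ with $G=C_0H$, which exists for a closed cocompact subgroup of a locally compact group, and sets $C=C_0^{-1}$). Given a semigroup length (resp.\ length) $L$ on $G$, I would first observe that its restriction $L|_H$ is again a semigroup length (resp.\ length) on $H$: local boundedness is inherited because compact subsets of $H$ are compact in $G$, while subadditivity (and, in the length case, symmetry) is immediate. By the strong Property PL (resp.\ Property PL) of $H$, the restriction $L|_H$ is therefore either bounded or proper, and I would treat these two cases separately.

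In the bounded case I would conclude directly that $L$ is bounded on $G$: writing an arbitrary $g\in G$ as $g=hc$ with $h\in H$ and $c\in C$, subadditivity gives $L(g)\le L(h)+L(c)\le \sup_H L+\sup_C L$, and both suprema are finite, the second by local boundedness of $L$ on the compact set $C$. In the proper case I would invoke Lemma \ref{cocompact_subset} with $K=\{1\}$, $D=H$ and $K'=C$: here $\{1\}$ and $C$ are compact, $H$ is a closed subset, and $G=\{1\}\cdot H\cdot C=HC$, so the hypotheses of that lemma are met. Since $L|_H=L|_D$ is proper, Lemma \ref{cocompact_subset} yields that $L$ is proper on all of $G$. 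In either case $L$ is bounded or proper, which is precisely (strong) Property PL for $G$.

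No genuine obstacle arises, as the result is essentially a corollary of Lemma \ref{cocompact_subset}; the only points deserving a word of care are bookkeeping. One should record that a cocompact subgroup is closed, so that $D=H$ is a legitimate closed set in Lemma \ref{cocompact_subset}, and that the compact transversal can be arranged on the right ($G=HC$), which follows by applying $g\mapsto g^{-1}$ to a left transversal. The same two-case argument covers both the length and the semigroup-length versions verbatim, so the parenthetical ``strong'' statement requires no separate treatment.
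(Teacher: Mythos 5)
Your proposal is correct and follows exactly the route the paper intends: the lemma is stated there as an immediate consequence of Lemma \ref{cocompact_subset}, applied with $K=\{1\}$, $D=H$ (closed since cocompact), $K'=C$ a compact right transversal, so that boundedness or properness of $L|_H$ transfers to $L$ on $G$. Your separate handling of the bounded case is harmless but redundant, since Lemma \ref{cocompact_subset} already covers both alternatives.
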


The converse is not true, even when $H$ is normal in $G$, in view of Proposition \ref{isomRn}.

\begin{proof}[Proof of Propositions \ref{isomRn} and \ref{qpzp}]
Let $L$ be a semigroup length on $G$. If $L$ is not proper, then there exists an unbounded sequence $(a_i)$ in $\R^n$ with $L(a_i)\le M$ for some $M<+\infty$ independent on $i$. Using transitivity of $K$, if $M'=M+2\sup_KL$, then for every $i$, the length $L$ is bounded by $M'$ on the sphere $S(a_i)$ of radius $a_i$ centered at $0$. As every element of the ball $D(a_i)$ of radius $a_i$ centered at $0$ is the sum of two elements of $S(a_i)$, it follows that $L$ is bounded by $2M'$ on $B(a_i)$. As $a_i\to\infty$, $L$ is bounded on $\R^n$, and hence $L$ is bounded on all of $G$.

Proposition \ref{qpzp} is proved in an analogous way, using the trivial fact that in any non-Archimedean local field $\K$, for any $m\ge n$, any element of valuation $m$ is sum of two elements of valuation $n$.
\end{proof}

\begin{proof}[Proof of Proposition \ref{cayleyPL}]
Define $L(g)$ as the least $n$ such that $g\in S^n$ and observe that $L$ is bounded on compact subsets because by the Baire category  theorem, $S^k$ has non-empty interior for some $k$. If $S$ is symmetric, then $L$ is a length. So the assumption implies that either $L$ is proper (and hence $S$ is bounded) or $L$ is bounded (and hence $S^n=G$ for some $n$).

Conversely, suppose that $G$ is compactly generated and the condition holds. Let $L$ be a non-proper semigroup length (resp. length) on $G$. Set $S_n=L^{-1}([0,n])$, which is symmetric if $L$ is a length. By non-properness, there exists $n_0$ such that $S_{n_0}$ is unbounded. As $G$ is compactly generated, $S_n$ generates $G$ for some $n\ge n_0$. Then, by assumption, every element of $G$ is product of a bounded number of elements from $S=S_n$. By subadditivity, this implies that $L$ is bounded on $G$.
\end{proof}

\begin{proof}[Proof of Proposition \ref{prop:eqPL}]
\noindent(ii')$\Rightarrow$(ii) is trivial.
\noindent(i)$\Rightarrow$(ii') Let $G$ act on the non-empty metric space by $C$-Lipschitz maps, and define $L(g)=d(x_0,gx_0)$ for some $x_0$ in $X$. Then $L$ satisfies the inequality $L(gh)\le L(g)+CL(h)$ for all $g$, $h$. By the remarks at the beginning of this Section \ref{discussion}, $L$ is a weak length, so is coarsely equivalent to a length. So $L$ is either proper or bounded.
\noindent(ii)$\Rightarrow$(i) This follows from the fact that any length vanishing at 1, is of the form $d(x_0,gx_0)$ for some isometric action of $G$ on a metric space, and \ref{i}.~in Section \ref{discussion}.

Of course (ii) implies (iii). The converse follows from the construction in \cite[Section~5]{NP}: every metric space $X$ embeds isometrically into an affine Banach space $B(X)$, equivariantly, i.e. so that any isometric group action on $X$ extends uniquely to an action by affine isometries on $B(X)$.
\end{proof}

\section{Lengths on semisimple groups}

Let us now proceed to the proof of Theorem \ref{main}.

\subsection{Lengths on the affine group}

Let $\K$ be a local field, and $D$ a cocompact subgroup of $\K^*$.

\begin{prop}
Let $L$ be a symmetric length on $\K\rtimes D$. If $L$ is non-proper on $D$, then $L$ is bounded on $\K$.\label{affine}
\end{prop}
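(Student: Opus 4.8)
The plan is to exploit the conjugation action of $D$ on the normal subgroup $\K$, using symmetry and subadditivity of $L$. First I would record the commutation identity in $\K\rtimes D$: for $d\in D$ and $x\in\K$ one has $(0,d)(x,1)(0,d)^{-1}=(dx,1)$. Combining this with subadditivity and with symmetry (which gives $L(0,d^{-1})=L(0,d)$) yields the fundamental inequality
\[
L(dx,1)\le L(x,1)+2L(0,d)\qquad\text{for all }x\in\K,\ d\in D.
\]
This is the only place symmetry enters, and it is precisely what lets me transport a bound on small elements of $\K$ to large ones: conjugating by a scalar $d$ of controlled length rescales $x$ while costing only $2L(0,d)$.

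Next I would extract, from the non-properness of $L$ on $D$, a sequence $(d_i)$ in $D$ with $L(0,d_i)\le M$ for a uniform $M$ and leaving every compact subset of $\K^*$. Since the relatively compact subsets of $\K^*$ are exactly those on which the absolute value $|\cdot|$ is bounded away from $0$ and $\infty$, after passing to a subsequence I obtain $|d_i|\to 0$ or $|d_i|\to\infty$; replacing $d_i$ by $d_i^{-1}$ if necessary (harmless again by symmetry, since $L(0,d_i^{-1})=L(0,d_i)\le M$), I may assume $|d_i|\to\infty$.

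The final step is a covering argument. By local boundedness, $B:=\sup\{L(x,1):|x|\le 1\}$ is finite, because the closed unit ball of $\K$ is compact. Given an arbitrary $x\in\K$, I choose $i$ with $|d_i|\ge|x|$; then $y:=d_i^{-1}x$ satisfies $|y|\le 1$, since multiplication by $d_i$ carries the unit ball of $\K$ onto the ball of radius $|d_i|$ (this holds verbatim for both Archimedean and non-Archimedean $\K$). Applying the fundamental inequality to the factorization $x=d_i y$ gives $L(x,1)\le L(y,1)+2L(0,d_i)\le B+2M$. As $x\in\K$ was arbitrary, $L$ is bounded by $B+2M$ on $\K$, which is the claim.

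I expect no serious obstacle: once the conjugation identity is in hand the proof is elementary. The points that need minor care are the reduction to $|d_i|\to\infty$ via symmetry, the use of closedness of $D$ (supplied by the cocompactness hypothesis) to guarantee that the $L$-bounded escaping sequence really satisfies $|d_i|\to 0$ or $\infty$ in $\K^*$ rather than accumulating at a point outside $D$, and the verification that $\{d_iy:|y|\le 1\}=\{z:|z|\le|d_i|\}$ holds uniformly across the Archimedean and non-Archimedean cases.
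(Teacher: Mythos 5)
Your proposal is correct and is essentially the paper's own argument: both proofs use symmetry of $L$ to get the two-sided conjugation estimate $L(dxd^{-1})\le L(x)+2L(d)$ and then exploit an $L$-bounded unbounded sequence in $D$ to transfer a bound on a compact piece of $\K$ to all of $\K$. The only (cosmetic) difference is direction: the paper conjugates each fixed $u\in\K$ \emph{into} a fixed compact neighborhood of $1$ (choosing $a_n$ or $a_n^{-1}$ per element), while you normalize $|d_i|\to\infty$ once and conjugate the compact unit ball \emph{outward} to cover $\K$ uniformly.
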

\begin{proof}
Fix $W$ a compact neighborhood of $1$, so that $L$ is bounded by a constant $M$ on $W$. Suppose that the length $L$ is not proper on $D$: there exists an unbounded sequence $(a_n)$ in $D$ such that $L(a_n)$ is bounded by a constant $M'$. Let $u$ be any element of the subgroup $\K$. Replacing some of the $a_n$ by $a_n^{-1}$ if necessary, we can suppose that $a_nua_n^{-1}\to 1$ (we use that $L$ is symmetric). Then for $n$ large enough, $w_n=a_nua_n^{-1}\in W$, on which $L$ is bounded by $M$. Writing $u=a_n^{-1}w_na_n$, we obtain that $L(u)\le M+2M'$.
\end{proof}

\begin{rem}\label{nsymaff}
Proposition \ref{affine} is false for semigroup lengths. Indeed, the subset
$$\{(x,\lambda)\in\K\rtimes D:|x|\le 1,0<|\lambda|\le M\}$$
generates $\K\rtimes D$ provided $M$ is large enough; the corresponding semigroup word length is obviously non-proper, and is easily checked to be unbounded on $\K$.
\end{rem}

\begin{rem}
Proposition \ref{affine} still holds if the normal subgroup $\K$ is replaced by a finite-dimensional $\K$-vector space, $D$ acting by scalar multiplication.
\end{rem}

\subsection{Case of $\textnormal{SL}_2$}

Denote $G=\textnormal{SL}_2(\K)$, and $D$, $U$, and 
$K$ the set of diagonal, unipotent, and orthogonal matrices in $G$. Let $L$ be any semigroup length on $G$.

We have a Cartan decomposition $G=KDK$, which implies by Lemma \ref{cocompact_subset} that boundedness and properness of the length $L$ on $G$ can be checked on $D$.

The matrix $
M=\left(\begin{matrix}
0 & -1   \\
1  & 0   
\end{matrix}\right)$
conjugates any matrix in $D$ to its inverse. It follows that $L(g)+L(g^{-1})$ is equivalent to $L$ on $D$, and hence on all of $G$. In other words, we can suppose that the length $L$ is symmetric.

So if $L$ is non-proper, then $L$ is bounded on $U$ by Proposition \ref{affine}. Similarly, $L$ is bounded on $U^t$, the lower unipotent subgroup of $G$ (this also follows from the fact that $U^t$ is conjugate to $U$ by $M$). As every element of $G$ is product of four elements in $U\cup U^t$, we conclude that $L$ is bounded on $G$.

\subsection{Reduction to $G$ simply connected}\label{redgsc}

Let $H\to G$ be the (algebraic) universal covering of $G$. Then the map $H_\K\to G_\K$ has finite kernel and cocompact image. Therefore, by Lemmas \ref{cocompact} and \ref{compact_kernel} strong Property PL for $G_\K$ follows from strong Property PL for $H_\K$.

So we can assume $G$ algebraically simply connected, and it will be convenient and harmless to identify $G$ with $G_\K$. Let $d\ge 1$ be the $\K$-rank of the simply connected $\K$-simple group $G$ and $D$ be a maximal split torus in $G$. The Cartan decomposition tells us that there exists a compact subgroup $K$ of $G$ such that $G=KDK$ (in the case of Lie groups, see \cite[Chap.~IX~1.]{Hel}; in the non-Archimedean case, see Bruhat-Tits \cite[Section 4.4]{BrT}).

So the proof consists in proving that if a semigroup length $L$ on $G$ is not proper on $D$, then it is bounded.

\subsection{Rank one}

This case is not necessary for the general case but we wish to point out that then the conclusion is straightforward.
Indeed, if $G$ is such a group, then its subgroup $D$ is contained in a subgroup isomorphic to $\text{SL}_2(\K)$ or $\textnormal{PSL}_2(\K)$ and therefore every length on $G$ is either proper or bounded on $D$.

\subsection{General case}

Remains the case of higher rank groups.
Let $W$ be the relative Weyl group of $G$ with respect to $D$, that is normalizer of $D$ (modulo its centralizer). Let $D^\vee\simeq\Z^d$ be the group of multiplicative characters of $D$, that is $\K$-defined homomorphisms from $D\simeq {\K^*}^d$ to the multiplicative group $\K^*$.

Then by \cite[Corollary~5.11]{BoT}, the relative root system is irreducible, so that by \cite[Chap.~V.3, Proposition 5(v)]{Bk}, the action of $W$ on $D^\vee\otimes_\Z\R$ is irreducible.

If $u$ is a function $D\to\R_+$, we say that a sequence $(a_n)$ in $D$ is $u$-bounded if $(u(a_n))$ is bounded.

Let $\Gamma\subset D^\vee$ be the set of $\alpha\in D^\vee$ such that every $L$-bounded sequence $(a_n)$ in $D$ is also $v\circ\alpha$-bounded, where $v(\lambda)=\log|\lambda|$ by definition. Then $\Gamma$ is a subgroup of $D^\vee$. It is easy to check that $D^\vee/\Gamma$ is torsion-free and that $\Gamma$ is $W$-invariant. On the other hand, by irreducibility, either $\Gamma=\{0\}$ or $\Gamma$ has finite index in $D^\vee$. As $D^\vee/\Gamma$ is torsion-free, this means that either $\Gamma=\{0\}$ or $\Gamma=D^\vee$.

Suppose that $L$ is not proper. Then there exists an sequence $(a_n)$ in $D$ which is $L$-bounded but not bounded. So there exists $\alpha\in D^\vee$ such that $v\circ\alpha(a_n)$ is unbounded. It follows that $\Gamma\neq D^\vee$. So $\Gamma=\{0\}$. In particular, for every relative root $\alpha$, there exists a sequence $(a_n)$ which is $L$-bounded but not $\alpha$-bounded. The argument of $\textnormal{SL}_2$ implies that $L$ is bounded on $U_\alpha$, and therefore for any root $\alpha$, $L$ is bounded on $D_{\alpha}=[U_\alpha,U_{-\alpha}]$. As any element of $D$ is a product of $d$ elements in $\bigcup D_\alpha$, we obtain that $L$ is bounded on $D$.

\section{Auxiliary results}

\begin{proof}[Proof of Lemma \ref{lem:product}]
By the argument of Lemma \ref{redgsc}, we can suppose that $G$ is simply connected.

Let $L$ be a length on $H\times A$, and suppose that $L$ is proper on both $H$ and $A$. Suppose that $L$ is not proper. Then there exists a sequence $(h_n,a_n)$ tending to infinity in $H\times A$ so that $L(h_n,a_n)$ is bounded. As $L$ is bounded on compact subsets and is proper in restriction to the factor $A$, the sequence $(h_n)$ tends to infinity. By Lemma \ref{lem:commutator} below, there exist bounded sequences $(k_n)$, $(k'_n)$ and $u$ in $G(\K)$ such that, writing $d_n=k_nh_nk'_n$, the sequence of commutators $([d_n,u])$ is unbounded.
Note that $L(d_n,a_n)$ is bounded as well.

Suppose that $L(d_n^{-1},a_n^{-1})$ is bounded (this holds if $L$ is assumed coarsely symmetric). Now $L([(d_n,a_n),(u,1)])=L([d_n,u],1)$ is bounded. But this contradicts properness of the restriction of $L$ to $H$.

If $L(d_n^{-1},a_n^{-1})$ is not assumed bounded, we can go on as follows. First note that the proof of Lemma \ref{lem:commutator} provides $(d_n)$ as a sequence in the maximal split torus $D$, and we assume this. 
If $W$ denotes the Weyl group of $D$ in $H$, then for every $d\in D$ the element $\prod_{w\in W}wdw^{-1}$ of $D$ is fixed by $W$, so is trivial.

Now the sequence $$L\left(\prod_{w\in W}(w,1)(d_n,a_n)(w^{-1},1)\right)=L(1,a_n^{|W|})$$
is bounded. Therefore, by properness on $\{1\}\times A$, the sequence $\kappa_n=a_n^{|W|}$ is bounded. Thus, the sequence $L(1,\kappa_n^{-1})$ is bounded.
Now the sequence $$L\left(\prod_{w\in W-\{1\}}(w,1)(d_n,a_n)(w^{-1},1)\right)=L((d_n^{-1},a_n^{-1})(1,\kappa_n))$$
is bounded in turn, so $L(d_n^{-1},a_n^{-1})$ is bounded, and this case is settled.
\end{proof}

\begin{lem}
Let $\K$ be a local field and $G$ a simple simply connected linear algebraic group over $\K$. Let $(g_n)$ be an unbounded sequence in $G(\K)$. Then there exist bounded sequences $(k_n)$, $(k'_n)$ and $u$ in $G(\K)$ such that the sequence of commutators $([k_ng_nk'_n,u])$ is unbounded. 
\label{lem:commutator}\end{lem}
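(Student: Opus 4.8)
The plan is to reduce, via the Cartan decomposition, to the case where $k_ng_nk'_n$ already lies in the maximal split torus $D$, and then to produce a fixed $u$ inside a suitable root group whose commutator with the $D$-elements detects the unboundedness.

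First I would invoke the Cartan decomposition $G=KDK$ recalled in Section \ref{redgsc}: write $g_n=p_nd_nq_n$ with $p_n,q_n\in K$ and $d_n\in D$, and set $k_n=p_n^{-1}$, $k'_n=q_n^{-1}$, so that $k_ng_nk'_n=d_n$. Since $K$ is compact, $(k_n)$ and $(k'_n)$ are bounded; and since $(g_n)$ is unbounded while $p_n,q_n$ range over the compact set $K$, the sequence $(d_n)$ is unbounded in $D$. It thus suffices to find a single $u\in G(\K)$ with $([d_n,u])$ unbounded.

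Next I would pin down a root accounting for this unboundedness. Let $\Phi=\Phi(G,D)\subset D^\vee$ be the relative root system, and recall $v(\lambda)=\log|\lambda|$. Because $G$ is semisimple, the common kernel of all roots is finite, so $\Phi$ spans $D^\vee\otimes_\Z\Q$. I claim that some $\beta\in\Phi$ satisfies $\sup_n|\beta(d_n)|=\infty$. Indeed, if every $|\alpha(d_n)|$ (for $\alpha\in\Phi$) were bounded, then, using $-\alpha\in\Phi$ as well, every $v(\alpha(d_n))$ would be bounded; writing an arbitrary $\chi\in D^\vee$ as a $\Q$-combination of roots and clearing denominators then shows $v(\chi(d_n))$ is bounded for all $\chi$, which under $D\simeq{\K^*}^d$ confines $(d_n)$ to a compact set, a contradiction. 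Fix such a $\beta$; along a subsequence, $|\beta(d_n)|\to\infty$.

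Finally I would use the scaling action of $D$ on the root group $U_\beta$. Choosing any nontrivial $u\in U_\beta(\K)$, the element $d_nud_n^{-1}$ again lies in $U_\beta(\K)$, and conjugation by $d_n$ dilates its coordinates by $\beta(d_n)$ (and by $\beta(d_n)^2$ on the $U_{2\beta}$-part, in the non-reduced case); hence $d_nud_n^{-1}$ leaves every compact subset of $U_\beta(\K)$ as $|\beta(d_n)|\to\infty$, so $(d_nud_n^{-1})$ is unbounded. Since $u^{-1}$ is fixed, $[d_n,u]=(d_nud_n^{-1})u^{-1}$ is then unbounded as well, and because $[k_ng_nk'_n,u]=[d_n,u]$ this choice of $u$, $(k_n)$, $(k'_n)$ is as required. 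The main (though modest) obstacle is the passage from ``unbounded in $D$'' to ``unbounded on some root'': one must check that boundedness of all roots and their inverses really does confine $(d_n)$ to a compact subset of $D$, which is exactly where semisimplicity enters, through the fact that $\Phi$ spans $D^\vee\otimes\Q$; the root-group dilation is standard and survives the non-reduced case.
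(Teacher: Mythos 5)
Your proposal is correct and follows essentially the same route as the paper's proof: Cartan decomposition $G=KDK$ to replace $(g_n)$ by an unbounded sequence in the split torus $D$, selection of a root $\beta$ with $|\beta(d_n)|$ unbounded, and a fixed nontrivial $u$ in the root group $U_\beta$, whose conjugates $d_nud_n^{-1}$ (hence the commutators $[d_n,u]$) are unbounded. You merely make explicit two points the paper leaves implicit --- that some root must be unbounded because the roots span $D^\vee\otimes_\Z\Q$ by semisimplicity, and the dilation computation in the non-reduced case --- which is a welcome elaboration rather than a deviation.
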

\begin{proof}
By the Cartan decomposition (see Paragraph \ref{redgsc}), we first pick $(k_n)$ and $(k'_n)$ such that $a_n=k_ng_nk'_n$ belongs to $D$, the maximal split torus. There exists one weight $\alpha$ such that $\alpha(a_n)$ is unbounded. Fix $u$ in the unipotent subgroup $G_\alpha$. Then $a_nua_n^{-1}$ is unbounded, so $[a_n,u]$ is unbounded as well. 
\end{proof}

Proposition \ref{prop:semisimple} follows from Lemma \ref{lem:product} when $G$ is a direct product of simple groups. The general case follows by passing to the (algebraic) universal covering $\tilde{G}$ of $G$, as $\tilde{G}(\K)$ maps to $G(\K)$ with finite kernel and cocompact image.

\baselineskip=16pt


\begin{thebibliography}{KM98b}

\bibitem[Be]{Bergman} G. Bergman. Generating infinite symmetric groups. Bull. London Math. Soc. 38 429-440, 2006.

\bibitem[Bk]{Bk} N. Bourbaki. \'El\'ements de Math\'ematique. Groupes et alg\`ebres de Lie. Chap 4-6. Hermann, 1968.

\bibitem[BoT]{BoT} A. Borel, J. Tits. \newblock  
Groupes r\'eductifs.
\newblock Publ. Math. Inst. Hautes \'Etudes Sci. 27, 55-151, 1965.


\bibitem[BrT]{BrT} F. Bruhat, J. Tits. \newblock 
Groupes r\'eductifs sur un corps local. I, Donn\'ees radicielles valu\'ees.
\newblock Publ. Math. Inst. Hautes \'Etudes Sci. 41, 5-251, 1972.

\bibitem[C]{Cor} Y. de Cornulier, Strongly bounded groups and infinite powers of finite groups. Comm. Algebra 34, 2337-2345, 2006.

\bibitem[CTV]{CTV} Y. de Cornulier, R. Tessera, A. Valette.
Isometric group actions on Banach spaces and representations vanishing at infinity.
Transform. Groups 13, no. 1, 125-147, 2008.

\bibitem[He]{Hel} S. Helgason. Differential Geometry, Lie Groups, and Symmetric Spaces. Academic Press, New York 1978. 

\bibitem[Hj]{Hj} G. Hjorth. Classification and orbit equivalence relations. Mathematical Surveys and 
Monographs, 75. American Mathematical Society, Providence, RI, 2000. 

\bibitem[NP]{NP} L. Nguyen Van Th\'e, V. Pestov. Fixed point-free isometric actions of topological groups on Banach spaces. Bull. Belg. Math. Soc. Simon Stevin 17 (2010), no. 1, 29--51.

\bibitem[Sh]{Shalom} Y. Shalom. Rigidity, unitary representations of 
semisimple groups, and fundamental groups of manifolds with rank one 
transformation group. Annals of Math. 152 113-182, 2000.

\bibitem[St]{St} R. Struble. Metrics in locally compact groups. Compos. Math. 28(3), 217-222, 1974.

\end{thebibliography}
\end{document}